\newtheorem{theorem}{Theorem}
\newtheorem{lemma}[theorem]{Lemma}
\newtheorem{definition}[theorem]{Definition}
\numberwithin{equation}{section}
\newtheorem{proposition}[theorem]{Proposition}
\newtheorem*{acknowledgements}{Acknowledgements}
\begin{document}

\title{Labelled version of the almost bounded case of S.\ B.\ Rao's degree sequence conjecture}
\author{Vaidy Sivaraman}

\address{Department of mathematical sciences, Binghamton University.}
\email{vaidy@math.binghamton.edu}

\begin{abstract}
 S.\  B.\  Rao conjectured that graphic sequences are well-quasi-ordered under an inclusion based on induced subgraphs. This conjecture has now been proved by Chudnovsky and Seymour. We give an independent short proof of the labelled version of the almost bounded case of S.\ B.\ Rao's conjecture, the case where we have a bound on the degree, but allow a bounded number of vertices to have unbounded degree.  
\end{abstract}

\keywords{Degree Sequence; S.\ B.\ Rao's conjecture; Well-quasi-ordering}
\date{June 14, 2013 \\  \text{      }  \text{  } \small {2010 Mathematics Subject Classification: 05C07}}   
\maketitle

\noindent
All our graphs are finite and simple, that is, we allow neither loops nor multiple edges. An  integer sequence $(d_1, \ldots ,d_n)$ with  $d_1 \geq  \ldots  \geq d_n \geq 1$ is {\it graphic} if there exists a graph $G$ whose vertex degrees are $d_1, \ldots ,d_n$. A graph $G$ is a {\it realization} of an integer sequence $D$ if the degree sequence of $G$ is $D$. \\

\noindent 
A quasi-order is a pair $(Q, \leq)$, where $Q$ is a set and $\leq$ is a reflexive and transitive relation on $Q$. An infinite sequence $q_1,q_2, \ldots $ in $(Q, \leq)$ is \textit{good} if there exist $i < j$ such that $q_i \leq q_j$. An infinite sequence $q_1,q_2, \ldots $ in $(Q, \leq)$ is \textit{perfect} if there exist $i_1 < i_2 < \ldots $ such that $q_{i_1}  \leq q_{i_2} \leq \ldots$. A quasi-order $(Q, \leq)$ is finite if the underlying set $Q$ is finite. A quasi-order $(Q, \leq)$ is a well-quasi-order (WQO) if every infinite sequence is good. It is not hard to show that every infinite sequence in a WQO is perfect. For a very readable introduction to well-quasi-ordering, we point to  \cite{DI}. \\

\noindent 
Let $(Q, \leq)$ be a WQO.  A sequence $((d_1,q_1), \ldots , (d_n,q_n))$ where $(d_1, \ldots , d_n)$ is a graphic sequence and $q_i \in Q$ for $1 \leq i \leq n$ is called a $Q$-labelled graphic sequence.
A $Q$-labelled graph is a pair $(G, f)$ where $G$ is a graph and $f : V(G) \to Q$ is a function. 
A $Q$-labelled graph $G$ is an induced subgraph of a $Q$-labelled graph $H$ if there exists an injection $\phi : V(G) \to V(H)$ such that (i) $xy \in E(G)$ if and only if $\phi(x) \phi(y) \in E(H)$ for all $x,y \in V(G)$ and (ii) $f_G(x) \leq f_H(\phi(x))$ for all $x \in V(G)$. 
A $Q$-labelled graph $G$ is said to realize a $Q$-labelled sequence $((d_1,q_1), \ldots , (d_n,q_n))$ if $V(G) = \{x_1, \ldots , x_n\}$ and degree of $x_i$ is $d_i$ and $f(x_i) = q_i$ for $1 \leq i \leq n$. \\

\begin{definition}
When $D_1$ and $D_2$ are $Q$-labelled graphic sequences, we write $D_1 \leq D_2$ to mean there exist $Q$-labelled graphs $G_1$ and $G_2$ such that $G_1$ realizes $D_1$, $G_2$ realizes $D_2$, and $G_1$ is an induced subgraph of $G_2$.  
\end{definition}

\noindent
The unlabelled version can be thought of as a special case of the labelled version by choosing $Q$ to be a one-element set. 
It is not hard to show that $\leq$ is both reflexive and transitive. S.\ B.\ Rao \cite{SBR} conjectured  that if $D_1, D_2, \ldots $ is an infinite sequence of graphic sequences, then there exist indices $i <  j$ such that $D_i \leq D_j$. S.\ B.\ Rao's conjecture has now been proved  by Chudnovsky and Seymour \cite{MCPS}.  We will be interested in the almost bounded case of the problem. The bounded version of the conjecture is a natural special case. The proof of Chudnovsky and Seymour for the general case uses three ingredients, one of which is the rooted version of the bounded case (cf. Section 6 in \cite{MCPS}). 
Several short proofs of  the bounded case are known (see \cite{CA}, \cite{VS}). In this article we present an argument that  allows labels from a WQO and works even when there are a bounded number of vertices of unbounded degree. 
More precisely, we prove the following. 

\begin{theorem}\label{BoundedSBRAO}
Let $N$ be a fixed positive integer. If $D_1,D_2, \ldots $ is an infinite sequence of $Q$-labelled graphic sequences with at most $N$ entries in any of them exceeding $N$, then there exist indices $i <  j$ such that $D_i \leq D_j$. 
\end{theorem}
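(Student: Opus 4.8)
The plan is to choose, for every $D_k$, a \emph{canonical} realization $G_k$ whose structure is governed by only finitely many discrete parameters together with labels drawn from $Q$, to encode each $G_k$ as an element of an explicitly built quasi-order, and then to read off the desired pair $i<j$ from the fact that this quasi-order is a WQO. Throughout I will use two standard facts: finite products and finite disjoint unions of WQOs are again WQOs; and Higman's lemma, that if $(P,\le)$ is a WQO then the finite multisets over $P$, ordered by ``there is an injection of the first into the second that dominates termwise in $P$,'' form a WQO. Since we only need a single good pair, I am free to pass to an infinite subsequence whenever finitely many cases arise.

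First I would separate, in each $D_k$, the at most $N$ \emph{big} entries (those exceeding $N$) from the \emph{small} entries (degree $\le N$). Passing to a subsequence, I may assume every $D_k$ has the same number $t\le N$ of big entries. In the realization $G_k$ the $t$ big vertices $B_k$ induce a graph on $t\le N$ vertices; as there are only finitely many graphs on at most $N$ vertices, a further subsequence makes this induced ``core'' graph a fixed shape $B$. The core then contributes only its label vector in $Q^{t}$, which lives in the WQO $Q^{t}$.

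The heart of the argument is a realization lemma: \emph{every $Q$-labelled graphic sequence with at most $N$ big entries admits a realization $G$ in which, after deleting the $t$ big vertices, every connected component of $G-B$ has at most $g(N)$ vertices, for some function $g$ depending only on $N$.} Granting this, each component of $G_k-B_k$, together with the record of which core vertices its vertices attach to, is a connected labelled graph on at most $g(N)$ vertices carrying an attachment pattern into the at most $N$ core vertices. There are only finitely many such \emph{gadget shapes} (shape plus attachment pattern), and decorating a fixed shape on $m$ vertices with its $Q$-labels gives a copy of $Q^{m}$; hence the set $\mathcal{G}$ of all gadgets is a finite disjoint union of such $Q^{m}$ and is a WQO. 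By Higman's lemma the multisets over $\mathcal{G}$ form a WQO $\mathcal{M}(\mathcal{G})$, and I encode $G_k$ as the pair $\big(\text{core labels},\ \text{multiset of gadgets}\big)\in Q^{t}\times\mathcal{M}(\mathcal{G})$, a product of WQOs and therefore a WQO. Applying the WQO property yields indices $i<j$ with $\mathrm{enc}(G_i)\le \mathrm{enc}(G_j)$.

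It remains to check that a domination of encodings is exactly an induced-subgraph embedding of labelled graphs, giving $D_i\le D_j$. The core of $G_i$ maps to the core of $G_j$ by the identity of shapes, with labels dominated; each gadget of $G_i$ maps into the gadget of $G_j$ assigned to it by the Higman injection, preserving induced adjacency and attachment type and raising labels only within $Q$. Because distinct gadgets sit in distinct components of $G-B$ and the attachment type is matched exactly (it is part of the discrete shape, so no spurious core--gadget edges are created or destroyed), the union of these maps is an induced-subgraph embedding respecting the label order, which is what the definition of $\le$ requires. I expect the realization lemma to be the main obstacle: one must, by a Havel--Hakimi / Erd\H{o}s--Gallai style greedy construction, use the big vertices to absorb enough edges that the remaining small vertices---each of prescribed degree $\le N$ and prescribed label---can be assembled into components of bounded size, taking care of parity and small leftover configurations (for instance realizing a block of degree-$2$ small vertices as disjoint triangles rather than as one long cycle). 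The remaining steps are routine applications of the WQO toolbox.
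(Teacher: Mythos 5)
Your architecture is sound, but it takes a genuinely different route from the paper, and it carries one substantive debt. The paper also splits off the at most $N$ high-degree vertices, fixes the graph they induce, and records for each remaining vertex the subset of the core it attaches to, as part of a label in $Q \times 2^N$. But it then applies Higman's theorem \emph{directly to the labelled degree sequences}, viewed as finite words over the WQO $[N] \times (Q \times 2^N)$ --- no canonical realization is ever chosen. The gap between a Higman embedding of sequences and the realization order $\leq$ is closed only at the end: given $C_1 \leq_H C_j$, realize $C_1$ by any graph, and realize the entries of $C_j$ missed by the embedding as a disjoint graph; those leftover entries have even sum and number at least $N^2$, so Proposition \ref{SufficientGraphicSequence} makes them graphic (and if all lengths are bounded, two sequences of equal length are Higman-comparable, the embedding is then the identity, and a single graph serves as both realizations). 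The point is that the order $\leq$ permits choosing realizations \emph{after} the combinatorial embedding has been found; your proposal commits to realizations first, which is exactly what forces your realization lemma into existence. What your route buys is an explicit gadget-by-gadget induced-subgraph embedding and a structural normal form for realizations; what the paper's buys is brevity --- no structural lemma about realizations is needed at all.

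The debt is the realization lemma, which you state, correctly flag as the main obstacle, but only sketch; as written the proposal is incomplete without it. The lemma is true, and a clean proof avoids "using the big vertices to absorb edges": take \emph{any} realization $G_0$, keep every edge incident to the big set $B$ exactly as in $G_0$ (so big degrees and all attachment sets are frozen, and each small vertex $v$ is left with a residual degree $b_v \leq N$), and re-realize only the graph on the small vertices. The residual sequence $(b_v)$ is graphic, being realized by $G_0 - B$; split its positive entries into blocks of size between $N^2$ and roughly $3N^2$, each with even sum (possible since the odd entries are even in number, so they can be distributed in pairs), and realize each block separately --- each block is graphic by Proposition \ref{SufficientGraphicSequence}. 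This yields components of size $O(N^2)$, and notably runs on exactly the same proposition that powers the paper's shortcut. The rest of your argument --- finiteness of gadget shapes, Higman on multisets over a finite disjoint union of copies of $Q^m$, and the verification that a domination of encodings assembles into an induced-subgraph embedding (injectivity of the matching keeps distinct gadgets in distinct components of $G_j - B_j$, and exact matching of attachment types prevents spurious core--gadget edges) --- is correct as you describe it.
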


\noindent
We will prove Theorem \ref{BoundedSBRAO}, a restricted version of S.\ B.\ Rao's conjecture, by using Higman's Finite Sequences Theorem.  We use the fact that if the number of entries in an integer sequence (with even sum) is much larger than its highest term, then it is necessarily graphic \cite{ZVZV}. 
\begin{proposition} \label{SufficientGraphicSequence}
 A positive $n$-tuple with even sum and largest entry $d$ is graphic if $n \geq d^2$.
\end{proposition}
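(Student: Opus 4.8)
My plan is to verify that the sequence is graphic by checking the Erd\H{o}s--Gallai inequalities directly. Write the tuple as $(d_1,\ldots,d_n)$ with $d_1 \geq \cdots \geq d_n \geq 1$ and put $d = d_1$. By the Erd\H{o}s--Gallai theorem, a positive non-increasing integer sequence with even sum is graphic if and only if, for every $k$ with $1 \leq k \leq n$, one has $\sum_{i=1}^{k} d_i \leq k(k-1) + \sum_{i=k+1}^{n} \min(d_i,k)$. Since the even-sum condition is part of the hypothesis, the entire task reduces to confirming this family of inequalities using only the assumption $n \geq d^2$.

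The second step is to replace both sides by the crudest bounds that still suffice. On the left, $\sum_{i=1}^{k} d_i \leq kd$, since each term is at most $d$. On the right, every summand $\min(d_i,k)$ is at least $1$ (because $d_i \geq 1$ and $k \geq 1$), and there are $n-k$ of them, so $\sum_{i=k+1}^{n} \min(d_i,k) \geq n-k$. Hence it is enough to show $kd \leq k(k-1) + (n-k)$, which rearranges to $n \geq k(d+2-k)$.

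The final step is to observe that the right-hand side, viewed as a function of $k$, is a downward parabola maximized at $k = (d+2)/2$, where it takes the value $(d+2)^2/4$. Thus it suffices to have $n \geq (d+2)^2/4$, and since $n \geq d^2$ this follows from $4d^2 \geq (d+2)^2$, i.e.\ $(3d+2)(d-2) \geq 0$, which holds for all $d \geq 2$. The single remaining case $d = 1$ is handled directly: every entry equals $1$, the even-sum hypothesis forces $n$ to be even, and a perfect matching on $n$ vertices is a realization.

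I expect the only delicate point to be ensuring that the two crude estimates, namely $kd$ on the left and $n-k$ on the right, are simultaneously tight enough; the computation above shows that they are, with $n \geq d^2$ being exactly what is needed and in fact tight at $d = 2$. An alternative, should one wish to avoid quoting Erd\H{o}s--Gallai, would be to argue constructively by a Havel--Hakimi style reduction, but the inequality route is shorter and self-contained given the classical criterion.
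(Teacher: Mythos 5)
Your proof is correct and follows essentially the same route as the paper: the paper does not prove Proposition~\ref{SufficientGraphicSequence} in-line but cites \cite{VS} for a proof via the Erd\H{o}s--Gallai condition, which is exactly the verification you carry out (bounding the left side by $kd$, the right side from below by $k(k-1)+(n-k)$, and maximizing $k(d+2-k)$ at $(d+2)^2/4 \le d^2$ for $d \ge 2$). Your separate treatment of $d=1$ via a perfect matching correctly closes the one case the quadratic inequality misses.
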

\noindent
A proof of Proposition \ref{SufficientGraphicSequence} using the Erd\H{o}s-Gallai Condition for an integer sequence to be graphic can be found in \cite{VS}.\\

\noindent
Finite quasi-orders are WQOs. In particular, if $N$ is a positive integer, the set $\{k \in \mathbb{N}: 1 \leq k \leq N\}$ with equality as the relation is a WQO. We will denote this WQO by $[N]$. \\

\noindent
Let $n$ be a positive integer. For $1 \leq i \leq n$, let $(Q_i, \leq_i)$ be a quasi-order. The Cartesian product of the $n$ quasi-orders is $(Q_1 \times \ldots \times Q_n, \leq)$, where $(a_1, \ldots ,a_n) \leq (b_1, \ldots ,b_n)$ if $a_i \leq_i b_i$ for all $1 \leq i \leq n$.
It is not hard to show that if all $(Q_i, \leq_i)$s are WQOs, then so is their Cartesian product. We will refer to this fact as the Cartesian product theorem. This theorem is a very special easy case of an important result due to Higman. 
If $(Q, \leq)$ is a quasi-order, then consider the set of all finite sequences of elements of $Q$ with the following ordering: $(a_1, \ldots , a_n) \leq_H (b_1, \ldots , b_m)$ if there exist $1 \leq i_1 < \ldots < i_n \leq m$ such that for all $1 \leq k \leq n$, $a_k \leq b_{i_k}$. We will denote this quasi-order by $(Q^{<\omega}, \leq_H)$. Higman proved the following beautiful and extremely important theorem.

\begin{theorem}[Higman \cite{GH}]
 If $(Q, \leq)$ is a WQO, then so is $(Q^{<\omega}, \leq_H)$.
\end{theorem}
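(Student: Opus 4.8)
The plan is to prove Higman's theorem by the \emph{minimal bad sequence} technique of Nash-Williams. Recall that a sequence is \emph{bad} if it fails to be good; so to establish that $(Q^{<\omega}, \leq_H)$ is a WQO it suffices to show it admits no bad sequence. I would argue by contradiction: assuming some bad sequence of finite sequences exists, I would build a particularly well-behaved one and then derive a contradiction with the well-quasi-ordering of $(Q, \leq)$ itself.

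First I would construct a minimal bad sequence $s_1, s_2, \ldots$ in $(Q^{<\omega}, \leq_H)$ greedily. Choose $s_1$ of smallest possible length subject to the existence of a bad sequence beginning with $s_1$; having chosen $s_1, \ldots, s_{i-1}$, choose $s_i$ of smallest possible length subject to $s_1, \ldots, s_{i-1}, s_i$ being extendable to a bad sequence. Since the lengths are natural numbers, each required minimum exists, and dependent choice produces the full sequence. No $s_i$ can be the empty sequence, since the empty sequence is $\leq_H$-below everything and its presence would make the whole sequence good. Hence I may write $s_i = (a_i) \cdot t_i$, where $a_i \in Q$ is the first entry of $s_i$ and $t_i$ is the (strictly shorter) tail.

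Next I would exploit that $(Q, \leq)$ is a WQO. The sequence $a_1, a_2, \ldots$ of first entries lies in $Q$, so it is good, and in fact perfect by the observation recorded earlier in the paper; thus there are indices $k_1 < k_2 < \cdots$ with $a_{k_1} \leq a_{k_2} \leq \cdots$. Now form the spliced sequence
\[
s_1, \ldots, s_{k_1 - 1}, \; t_{k_1}, \, t_{k_2}, \, t_{k_3}, \ldots
\]
Its entry in position $k_1$, namely $t_{k_1}$, is strictly shorter than $s_{k_1}$, so by the minimality of the construction this spliced sequence cannot be bad, and is therefore good. A case analysis on where the witnessing comparable pair lies then finishes the argument. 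If both members are among $s_1, \ldots, s_{k_1-1}$, we contradict the badness of the original sequence directly. If one member is some $s_p$ with $p < k_1$ and the other is a tail $t_{k_j}$, then $s_p \leq_H t_{k_j} \leq_H s_{k_j}$ (because $t_{k_j}$ embeds into $s_{k_j}$ via the positions after the first) gives the same contradiction, as $p < k_j$. Finally, if both members are tails $t_{k_i}, t_{k_j}$ with $i < j$, then $t_{k_i} \leq_H t_{k_j}$ together with $a_{k_i} \leq a_{k_j}$ yields $s_{k_i} = (a_{k_i}) \cdot t_{k_i} \leq_H (a_{k_j}) \cdot t_{k_j} = s_{k_j}$, again contradicting badness.

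The conceptual heart of the argument, and the step I expect to be the main obstacle, is the minimal-bad-sequence construction paired with the splicing move: one must see that replacing each chosen word by its tail along the perfect subsequence of heads produces a sequence whose goodness is \emph{forced} by minimality, and that this goodness can always be transported back to a forbidden comparison in the original. The three-case verification is routine once the setup is in place; the real content is the observation that prepending $\leq$-comparable heads to $\leq_H$-comparable tails preserves $\leq_H$, which is precisely what makes the head/tail decomposition interact correctly with the well-quasi-ordering of $Q$.
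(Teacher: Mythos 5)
Your argument is correct, and it is the standard Nash-Williams minimal-bad-sequence proof of Higman's theorem. There is, however, nothing in the paper to compare it against: the paper states the theorem as a quoted result with a citation to Higman \cite{GH} and gives no proof, using it only as a black box in the proof of Lemma \ref{BoundedLabelledSBRAO}. So your proposal supplies a proof where the paper deliberately has none. It is worth knowing that your route is not Higman's original 1952 argument, which was cast in the language of abstract algebras and proceeded by a rather different, more syntactic analysis; the minimal-bad-sequence technique you use is due to Nash-Williams (1963) and is by now the canonical short proof. Your write-up is essentially complete: the construction of the minimal bad sequence, the head/tail decomposition $s_i = (a_i) \cdot t_i$, the extraction of a perfect subsequence of heads (which matches the paper's remark that every infinite sequence in a WQO is perfect), the splicing move, and the three-case analysis are all correct, including the key observation that $a \leq b$ and $t \leq_H u$ imply $(a)\cdot t \leq_H (b)\cdot u$. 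Two points you leave implicit but should state for full rigor: first, that the greedily constructed sequence is itself bad (any comparable pair $s_i \leq_H s_j$ inside an initial segment would contaminate every bad extension of that segment, contradicting the construction); second, that in your middle case the comparison necessarily runs from $s_p$ to $t_{k_j}$ and not the reverse, because in the spliced sequence every tail occurs after all of $s_1, \ldots, s_{k_1-1}$ --- you use this ordering correctly but do not remark on it. Neither is a gap; both are one-line verifications.
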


\noindent
To prove Theorem \ref{BoundedSBRAO}, we need a lemma. 

\begin{lemma}\label{BoundedLabelledSBRAO}
Let $N$ be a fixed positive integer. If $D_1,D_2, \ldots $ is an infinite sequence of $Q$-labelled graphic sequences with no entry in any of them exceeding $N$, then there exist indices $i <  j$ such that $D_i \leq D_j$. 
\end{lemma}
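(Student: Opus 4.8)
The plan is to realize each term of the sequence as a finite word over a fixed well-quasi-order and then invoke Higman's theorem. Since every degree in every $D_k$ lies in $\{1, \dots, N\}$, I regard a vertex as carrying the datum of its degree together with its label, i.e.\ an element of the set $[N] \times Q$, where I equip $[N] = \{1, \dots, N\}$ with the natural order $\le$ (a finite, hence well-quasi-ordered, chain) rather than with equality. By the Cartesian product theorem, $([N]\times Q, \le)$ is a WQO. Writing each $Q$-labelled graphic sequence $D_k = ((d_1^k, q_1^k), \dots, (d_{n_k}^k, q_{n_k}^k))$ as the finite sequence of pairs $(d_i^k, q_i^k) \in [N]\times Q$, Higman's theorem applied to $(([N]\times Q)^{<\omega}, \le_H)$ yields indices $i < j$ with $D_i \le_H D_j$.

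Unpacking $\le_H$, there is an injection $\psi$ from the entries of $D_i$ to the entries of $D_j$ such that every vertex $x$ of $D_i$ is sent to a vertex $\psi(x)$ of $D_j$ whose degree is at least that of $x$ and whose label dominates that of $x$ in $Q$. So it remains to prove the following realization statement: whenever $A$ and $B$ are graphic sequences admitting a degree-nondecreasing injection $\psi$ of $A$'s entries into $B$'s, one can choose a realization $G_A$ of $A$ and a realization $G_B$ of $B$ with $G_A$ an induced subgraph of $G_B$ via $\psi$. Granting this, I set $G_A$'s labels to be those of $D_i$ and $G_B$'s to be those of $D_j$; condition (ii) of induced containment, $f_{G_A}(x) \le f_{G_B}(\psi(x))$, is exactly the label-domination supplied by $\le_H$, so $D_i \le D_j$ follows and the lemma is proved.

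For the realization statement I would fix any realization $G_A$ of $A$ and place it on the image vertices $S = \psi(V(A))$; the remaining vertices $T$ of $B$ must then be added so that each $s = \psi(x)$ acquires exactly $\deg_B(s) - \deg_A(x) \ge 0$ further edges, all going to $T$, while each $t \in T$ attains degree $\deg_B(t)$, and no new edge joins two vertices of $S$. This is a degree-prescribed realization problem in which the new edges form a graph having $S$ as an independent set; a parity count shows the total new demand $\sum_s(\deg_B(s)-\deg_A(\psi^{-1}(s))) + \sum_{t\in T}\deg_B(t)$ equals $2|E(B)| - 2|E(A)|$ and is therefore even.

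The feasibility of this extension is the main obstacle. The naive choice of $\psi$ (matching equal degrees) can fail, since the leftover degrees on $T$ need not be graphic, as already happens when $A$ is a single edge and $B$ is a star or a path; so I must exploit the freedom in $\psi$ to route $A$'s vertices to high-degree vertices of $B$ and thereby keep every residual demand satisfiable. Here Proposition~\ref{SufficientGraphicSequence} is the key tool: once $T$ is large relative to $N$, its even-summed, $N$-bounded residual sequence is automatically graphic, and the bipartite deficiencies $\deg_B(s)-\deg_A(x) \le N$ are easily met because $|T|$ exceeds every such demand, while the finitely many configurations in which $T$ is small are disposed of directly. Carrying out this feasibility analysis — choosing $\psi$ well and verifying the Gale--Ryser-type conditions for the $S$–$T$ bipartite part — is the step I expect to require the most care.
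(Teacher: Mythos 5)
Your reduction hinges on the \emph{realization statement}: that a degree-nondecreasing Higman embedding between graphic sequences can always be promoted to an induced-subgraph containment of suitable realizations. That statement is false, and not because of a fixable choice of $\psi$: with the natural order on $[N]$, the pair that Higman's theorem hands you need not satisfy the lemma's conclusion at all. Concretely, $(1,1,1,1) \leq_H (2,2,2,2)$ entrywise, but the unique realization of $(2,2,2,2)$ is $C_4$, which contains no induced $2K_2$, so $(1,1,1,1) \not\leq (2,2,2,2)$ in the order of Definition 1 --- for every injection $\psi$. The failure persists even when the larger sequence is much longer, which is where you hope Proposition \ref{SufficientGraphicSequence} rescues you: take $A = (1,\ldots,1)$ with $2m$ entries and $B = (2,\ldots,2)$ with $2m+N^2$ entries. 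Every realization of $B$ is a disjoint union of cycles, and such a graph on $2m+N^2$ vertices has no induced matching of size exceeding $\lfloor (2m+N^2)/3 \rfloor$, which is less than $m$ once $m > N^2$; so again $A \not\leq B$. The root cause is that your residual demands $\deg_B(s) - \deg_A(x)$ can have unbounded total, forcing many $S$--$T$ edges that no Gale--Ryser analysis can reconcile with inducedness. Moreover, the freedom in $\psi$ that you plan to exploit is largely illusory: for an arbitrary WQO $Q$, the only label comparisons you possess are those along the particular embedding Higman provides, so rerouting a vertex of $A$ to a high-degree vertex of $B$ requires a comparison $f_A(x) \leq f_B(z)$ that you cannot establish.

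The repair is exactly the option you rejected in your first paragraph: order $[N]$ by \emph{equality} (a finite quasi-order, hence still a WQO). Then Higman embeddings preserve degrees exactly, all residual demands on the image vanish, and no $S$--$T$ edges are needed: the leftover entries of the larger sequence have even sum (a difference of two even sums), so if there are at least $N^2$ of them, Proposition \ref{SufficientGraphicSequence} makes the leftover graphic, and the disjoint union of a realization of the smaller sequence with a realization of the leftover realizes the larger sequence while containing the former as an induced subgraph with dominated labels. One further adjustment is needed: extracting a single pair $i < j$ from Higman does not suffice even with equality --- for instance $(2,2,2)$ embeds degree-preservingly into $(2,2,2,2)$, yet $K_3$ is not induced in $C_4$ and the leftover $(2)$ is not graphic. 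You must instead use the fact that every infinite sequence in a WQO has an infinite increasing subsequence, and then argue a dichotomy: if the lengths along that subsequence are bounded, two terms have equal length, hence identical degree sequences with dominated labels, and the inclusion is immediate; otherwise some term exceeds the first in length by at least $N^2$ and the disjoint-union argument applies. This corrected route is precisely the paper's proof; as written, with the order on $[N]$ weakened to the natural one, your approach cannot be completed.
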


\begin{proof}
 By the Cartesian product theorem, $[N] \times Q$ is a WQO. Applying Higman's theorem we see that the set of finite sequences with elements in $[N] \times Q$ is a WQO under Higman embedding. Hence there exists an increasing subsequence $C_1, C_2, \ldots$ of $D_1, D_2, \ldots$. We restrict to that. If the length of the sequences $\{C_i\}$s is bounded, then we clearly have an inclusion. If not, let $C_j$ be such that the difference in the length of $C_j$ and $C_1$ is at least $N^2$. By using Proposition \ref{SufficientGraphicSequence} we conclude $C_1 \leq C_j$. 
\end{proof}

\noindent
We are now ready to prove Theorem \ref{BoundedSBRAO}.

\begin{proof}[\bf{Proof of Theorem \ref{BoundedSBRAO}}]
Let $G_i$ be a graph realizing $D_i$. We may assume that each $G_i$ has at least $N$ vertices. Let us rename the $N$ largest degree vertices of each $G_i$ as $x_1, \ldots ,x_N$. Since there are only finitely many non-isomorphic simple graphs on $N$ vertices, we may assume that $G_i | \{x_1, \ldots ,x_N\}$ is the same for all i. By the Cartesian product theorem, we may also assume that the labels for each of the vertices $x_i$ is increasing. The set of subsets of $\{x_1, \ldots ,x_N\}$ with set equality as the relation is a WQO, and we will denote this by $2^N$. Now consider the $(Q \times 2^N)$-labelled graphic sequence $D_1', D_2', \ldots$ where the degree sequence of $D_i'$ is that of $G_i - \{x_1, \ldots ,x_N\}$, and the label being a pair, where the first entry is just the label of the vertex, and the second one is the subset of $\{x_1, \ldots , x_N\}$ to which the vertex is adjacent. By Lemma \ref{BoundedLabelledSBRAO}, we can find indices $i < j$ with $D_i' \leq D_j'$. Putting back the vertices $\{x_1, \ldots , x_N \}$, we see that $H_i \leq H_j$, where $H_i$ is a realization of $D_i$ and $H_j$ is a relaization of $D_j$, giving us the required inclusion $D_i \leq D_j$. 
\end{proof}

\noindent
Higman's theorem is undoubtedly the most important basic result in  the theory of well-quasi-ordering. It inspired Kruskal to prove his famous tree theorem, and motivated Robertson and Seymour to prove their celebrated graph minor theorem. For more on the history of the theory of well-quasi-ordering, the reader is directed to a wonderful survey by Kruskal \cite{JK}. In \cite{VS}, it is shown that Higman's theorem is not needed to prove  the bounded case of S. B. Rao's conjecture. In this article we have demonstrated that the almost bounded case follows easily from Higman's theorem. The proof of S. B. Rao's conjecture given in \cite{MCPS} uses Higman's theorem, but it is not the only main ingredient.  It is not clear whether the general case can be obtained from Higman's theorem in a few easy steps.

\begin{acknowledgements}
I am grateful to Neil Robertson who mentioned the almost bounded case as the next stage after the bounded case.  
\end{acknowledgements}

\end{document}